\numberwithin{equation}{section}
\theoremstyle{plain} 
\newtheorem{theorem}{Theorem}[section]
\newtheorem{corollary}[theorem]{Corollary}
\newtheorem{lemma}[theorem]{Lemma}
\newtheorem{proposition}[theorem]{Proposition}
\theoremstyle{definition} 
\theoremstyle{definition} 
\newtheorem*{ex*}{Example}
\theoremstyle{remark} 
\theoremstyle{remark} 
\newtheorem{remark}[theorem]{Remark}
\newtheorem*{remark*}{Remark}
\numberwithin{equation}{section}
\newcommand{\beqa}{\begin{eqnarray}}
\newcommand{\eeqa}{\end{eqnarray}}
\newcommand{\bseq}{\begin{subequations}}
\newcommand{\eseq}{\end{subequations}}
\newcommand{\dd}{\partial}
\newcommand{\p}{\mathcal{P}}
\renewcommand{\dd}{{\,\operatorname{d}}}
\newcommand{\al}{\alpha}
\newcommand{\ga}{\gamma}
\newcommand{\si}{\sigma}
\newcommand{\ka}{\kappa}
\newcommand{\la}{\lambda}
\newcommand{\de}{\delta}
\newcommand{\be}{\beta}
\newcommand{\vpi}{\varphi}
\renewcommand{\Psi}{\overline{\Phi}}
\newcommand{\PP}{\operatorname{\mathsf{P}}} 
\newcommand{\E}{\operatorname{\mathsf{E}}}
\newcommand{\R}{\mathbb{R}}
\newcommand{\X}{\mathbf{X}}
\newcommand{\Y}{\mathbf{Y}}
\newcommand{\x}{\mathbf{x}}
\newcommand{\y}{\mathbf{y}}
\newcommand{\e}{\mathbf{e}}
\newcommand{\s}{\mathbf{s}}
\renewcommand{\v}{\mathbf{v}}
\renewcommand{\u}{\mathbf{u}}
\newcommand{\vp}{\varepsilon}
\newcommand{\tp}{{\tilde{p}}}
\newcommand{\tq}{{\tilde{q}}}
\renewcommand{\le}{\leqslant}
\renewcommand{\ge}{\geqslant}
\begin{document}

\begin{frontmatter}

\title{Exponential deficiency of convolutions of densities}
\runtitle{Deficiency of convolutions of densities}

%

\begin{aug}
\author{\fnms{Iosif} \snm{Pinelis}\thanksref{t2}\ead[label=e1]{ipinelis@mtu.edu}}
  \thankstext{t2}{Supported by NSF grant DMS-0805946}
\runauthor{Iosif Pinelis}


\address{Department of Mathematical Sciences\\
Michigan Technological University\\
Houghton, Michigan 49931, USA\\
E-mail: \printead[ipinelis@mtu.edu]{e1}}
\end{aug}

\begin{abstract}
If a probability density $p(\x)$ ($\x\in\R^k$) is bounded and $R(t):=\int\e^{\langle\x,t\u\rangle}\,p(\x)\dd\x<\infty$ for some linear functional $\u$ and all $t\in(0,1)$, then, for each $t\in(0,1)$ and all large enough $n$, the $n$-fold convolution of the $t$-tilted density $\tilde p_t(\x):=e^{\langle\x,t\u\rangle}p(\x)/R(t)$ is bounded. 
This is a corollary of a general, ``non-i.i.d.'' result, which is also shown to enjoy a certain optimality property. 
Such results are useful for saddle-point approximations.  
\end{abstract}

\begin{keyword}[class=AMS]
\kwd[Primary ]{60E05}
\kwd{60E10}
\kwd[; secondary ]{60F10}
\kwd{62E20}
\kwd{60E15}
\end{keyword}

\begin{keyword}
\kwd{probability density}
\kwd{saddle-point approximation}
\kwd{sums of independent random variables/vectors}
\kwd{convolution}
\kwd{exponential integrability}
\kwd{boundedness}
\kwd{tilting}
\kwd{exponential families}
\end{keyword}

\end{frontmatter}


{\small\tableofcontents} 



\theoremstyle{plain} 
\numberwithin{equation}{section}

\eject

\section{Introduction}\label{intro}

Let $\X$ be a random vector in $\R^k$
such that 
\begin{equation}\label{eq:exp}
	M:=\E e^{\la\,\e\X}<\infty 
\end{equation}
for some unit vector $\e\in\R^k$ and some $\la\in(0,\infty)$; here the juxtaposition $\e\x$ denotes the Euclidean scalar product of vectors $\e$ and $\x$ in $\R^k$. 
By Chebyshev's inequality, the exponential integrability condition \eqref{eq:exp} implies the tail estimate 
\begin{equation}\label{eq:cheb}
	\PP(\e\X\ge x)\le M\,e^{-\la x}\quad\text{for all $x\in\R$. }
\end{equation}
Vice versa, for any given $\la_0\in(0,\infty]$ one has the following: if \eqref{eq:cheb} holds for each $\la\in[0,\la_0)$ and some  $M=M(\la)\in(0,\infty)$, then $\E e^{\la\e\X}<\infty$ for each $\la\in[0,\la_0)$.

Suppose also that (the distribution of) $\X$ has a density $p$ (relative to the Lebesgue measure) such that, for some $\mu\in[0,\la)$ and some $C\in(0,\infty)$, 
\begin{equation}\label{eq:mu-bound}
	p(\x)\le C\,e^{-\mu\,\e\x}\quad\text{for all $\x\in\R^k$.}
\end{equation}
Note also that, if $\mu=0$, then condition \eqref{eq:mu-bound} simply means that the density $p$ is bounded. 

If $p$ is varying regularly enough in an appropriate sense then, given the condition \eqref{eq:exp}, one will have \eqref{eq:mu-bound} for $\mu=\la$; 
that is, one will have an exact ``local'' counterpart to the ``integral'' upper bound \eqref{eq:cheb}. 
The difference  
\begin{equation*}
	\vp:=\la-\mu
\end{equation*}
\big(between the largest possible $\la$ and $\mu$ 
for which \eqref{eq:exp} and \eqref{eq:mu-bound} will still hold\big)  
may therefore be referred to as the (exponential) ``deficiency'' of the density $p$, which is a measure of its irregularity. 
 
The main result of this paper implies that the deficiency decreases fast under convolution: starting with condition \eqref{eq:mu-bound} for $p$ with $\mu=\la-\vp$, one has this condition for the $n$-fold convolution $p^{*n}$ (in place of $p$) with $\mu=\la-\vp/n$; that is, for the $n$-fold convolution, the deficiency is $n$ times as small as the original one. More generally, it is proved that, for any probability densities $p_1,\dots,p_n$ on $\R^k$ satisfying the exponential integrability condition with the same $\la$ and with respective deficiencies $\vp_1,\dots,\vp_n$, the deficiency of the convolution $p_1*\dots*p_n$ is no greater than $\vp^\sharp/n$, where $\vp^\sharp$ stands for the harmonic mean of the original deficiencies $\vp_1,\dots,\vp_n$. Moreover, it is shown that this bound, $\vp^\sharp/n$, cannot be improved. 
  
\section{Statements of the results}\label{results}
Let $\X_1,\dots,\X_n$ be any independent random vectors in $\R^k$, with densities $p_1,\dots,p_n$.

Assume the following conditions:
\begin{gather}
M_i:=\E e^{\la\,\e\X_i}=\int_{\R^k}e^{\la\,\e\x}p_i(\x)\dd\x<\infty \label{eq:exp_i}\\
	\intertext{and}
		p_i(\x)\le C_i\,e^{-\mu_i\,\e\x} \label{eq:mu-bound_i}
\end{gather} 
for some $C_i$'s in $(0,\infty)$, some $\mu_i$'s in $[0,\la)$, all $i\in\{1,\dots,n\}$, and all $\x\in\R^k$. 
Consider the convolution 
\begin{equation}\label{eq:p}
	p^{(n)}:=p_1*\dots*p_n,
\end{equation}
which is the density of the sum $\X_1+\dots+\X_n$.

\begin{theorem}\label{th:}
There exists a finite constant 
$K_n$, which depends only on the numbers $n$, $\la$, $\mu_i$, $M_i$, and $C_i$,  
such that 
\begin{equation}\label{eq:th}
	p^{(n)}(\x)\le K_n\,e^{-(\la-\vp^{(n)})\,\e\x}\quad\text{for all $\x\in\R^k$, }
\end{equation}
where 
\begin{equation}\label{eq:vp}
	\vp^{(n)}:=\frac1{\frac1{\vp_1}+\dots+\frac1{\vp_n}}\quad\text{and}\quad
	\vp_i:=\la-\mu_i>0.
\end{equation}
\end{theorem}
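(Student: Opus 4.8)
The plan is to absorb the target decay rate into an exponential reweighting and then invoke Young's convolution inequality a single time. Set $\theta:=\la-\vp^{(n)}$ and, for $i=1,\dots,n$, let $F_i(\x):=e^{\theta\,\e\x}p_i(\x)$. Because $\e\x_1+\dots+\e\x_n=\e\x$ whenever $\x_1+\dots+\x_n=\x$, the exponential tilt commutes with convolution, so
\[
(F_1*\dots*F_n)(\x)=e^{\theta\,\e\x}\,p^{(n)}(\x)\qquad\text{for every }\x\in\R^k;
\]
hence \eqref{eq:th} is equivalent to the assertion that $F_1*\dots*F_n$ is bounded on $\R^k$, and $K_n$ may be taken to be any bound for its supremum.

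The crux is that the harmonic‑mean structure places each $F_i$ in exactly the Lebesgue space needed for a multilinear Young estimate with an $L^\infty$ output. Put $q_i:=\vp_i/(\vp_i-\vp^{(n)})$. For $n\ge2$ (the case $n=1$ being the hypothesis \eqref{eq:mu-bound_i} itself) one has $0<\vp^{(n)}<\vp_i$, so each $q_i\in(1,\infty)$, and, using \eqref{eq:vp},
\[
\sum_{i=1}^n\frac1{q_i}=\sum_{i=1}^n\Bigl(1-\frac{\vp^{(n)}}{\vp_i}\Bigr)=n-\vp^{(n)}\sum_{i=1}^n\frac1{\vp_i}=n-1,
\]
which is precisely the exponent condition in Young's inequality $\|F_1*\dots*F_n\|_\infty\le\prod_{i=1}^n\|F_i\|_{q_i}$. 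To see $F_i\in L^{q_i}(\R^k)$, write $p_i^{\,q_i}=p_i\cdot p_i^{\,q_i-1}$ and apply \eqref{eq:mu-bound_i} raised to the nonnegative power $q_i-1$ to the second factor:
\[
\int_{\R^k}F_i^{\,q_i}\dd\x=\int_{\R^k}e^{q_i\theta\,\e\x}p_i(\x)^{q_i}\dd\x\le C_i^{\,q_i-1}\int_{\R^k}e^{(q_i\theta-\mu_i(q_i-1))\,\e\x}p_i(\x)\dd\x .
\]
Since $\theta-\mu_i=\vp_i-\vp^{(n)}$, the exponent is $q_i(\theta-\mu_i)+\mu_i=q_i(\vp_i-\vp^{(n)})+\mu_i=\vp_i+\mu_i=\la$, so the last integral equals $M_i$ by \eqref{eq:exp_i}; hence $\|F_i\|_{q_i}\le(C_i^{\,q_i-1}M_i)^{1/q_i}=C_i^{\,\vp^{(n)}/\vp_i}M_i^{\,1-\vp^{(n)}/\vp_i}<\infty$.

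Combining the two paragraphs, \eqref{eq:th} holds with $K_n:=\prod_{i=1}^n C_i^{\,\vp^{(n)}/\vp_i}M_i^{\,1-\vp^{(n)}/\vp_i}$, which depends only on $n,\la,\mu_i,M_i,C_i$, as required. For the multilinear Young inequality one may cite it directly or iterate the two‑factor form $\|f*g\|_r\le\|f\|_p\|g\|_q$ ($1+1/r=1/p+1/q$): with $1/q_i'=1-1/q_i=\vp^{(n)}/\vp_i$, every partial sum $\sum_{i\le j}1/q_i'$ is at most $\sum_{i=1}^n1/q_i'=1$, so each intermediate step is legitimate in any order, and the final step $\int G(\x-\y)F_n(\y)\dd\y\le\|G\|_{q_n'}\|F_n\|_{q_n}$ (with $G=F_1*\dots*F_{n-1}\in L^{q_n'}$) yields a genuine bound valid at every $\x$.

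There is no single deep obstacle here; the content is the reverse engineering of the parameters. One must choose $\theta$ and the exponents $q_i$ so that simultaneously (a) $\sum_i1/q_i=n-1$ and (b) the exponential moment defining $\|F_i\|_{q_i}$ lands exactly on the order $\la$ covered by \eqref{eq:exp_i}; both requirements force $\vp^{(n)}=\bigl(\sum_i1/\vp_i\bigr)^{-1}$, which is exactly why the harmonic mean appears in the conclusion. The only points requiring care are the degenerate cases ($n=1$, or some $\mu_i=0$), the verification $q_i\in(1,\infty)$ so that the power manipulations and the hypotheses of Young's inequality are valid, and the observation that the exponential integrals over $\{\e\x<0\}$ cause no difficulty because each $p_i$ is a probability density and the exponents involved are nonnegative.
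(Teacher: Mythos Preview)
Your argument is correct, and it takes a genuinely different route from the paper's.

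The paper proceeds by induction on $n$, reducing to the case $n=2$; there it writes the convolution integral $\int p(x-u,\y-\v)q(u,\v)\,\dd u\,\dd\v$, splits it at the threshold $u=\al x$ with $\al=\vp/(\vp+\de)$, applies the pointwise bound \eqref{eq:mu-bound_i} to one factor on each side of the split, and then controls the remaining one-dimensional tail integral using the Markov-type consequence \eqref{eq:exp-rewr} of the moment condition. Your approach instead absorbs the target exponent into a tilt and invokes the multilinear Young inequality once, with the exponents $q_i=\vp_i/(\vp_i-\vp^{(n)})$ reverse-engineered so that both the Young condition $\sum 1/q_i=n-1$ and the integrability $\|F_i\|_{q_i}^{q_i}\le C_i^{q_i-1}M_i$ hold simultaneously. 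This is cleaner, handles all $n$ at once without an outer induction, and delivers the explicit constant $K_n=\prod_i C_i^{\vp^{(n)}/\vp_i}M_i^{1-\vp^{(n)}/\vp_i}$, which the paper's argument does not produce in closed form. It is also worth noting that your method resolves precisely the limitation the authors flag in their Remark~\ref{rem:planch}: the Plancherel/$L^2$ trick there only yields the result for $n$ a power of $2$, whereas the full Young inequality (equivalently, H\"older applied at the last convolution step, as you do) covers every $n$. The paper's hands-on splitting, on the other hand, is entirely self-contained and may make the mechanism behind the harmonic-mean exponent somewhat more transparent to a reader unfamiliar with Young's inequality.
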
 

The necessary proofs will be given in Section~\ref{proofs}. 

Note that $\vp^{(n)}=\vp^\sharp/n$, where $\vp^\sharp$ denotes the harmonic mean of $\vp_1,\dots,\vp_n$. One may also note that $\vp^{(n)}<\min(\vp_1,\dots,\vp_n)$. 

It turns out that the coefficient $\la-\vp^{(n)}$ in the exponent in the bound \eqref{eq:th} is the best possible:  

\begin{proposition}\label{prop:best}
For any natural $k$ and $n$, any $\la\in(0,\infty)$, and any $\mu_i$'s in $[0,\la)$, the estimate \eqref{eq:th} will fail to hold if 
the number $\vp^{(n)}$ given by \eqref{eq:vp} is replaced by any smaller number. 
\end{proposition}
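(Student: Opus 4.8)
The plan is to prove sharpness by constructing, for the given $k$, $n$, $\la$, and $\mu_1,\dots,\mu_n\in[0,\la)$, explicit densities $p_1,\dots,p_n$ satisfying \eqref{eq:exp_i}--\eqref{eq:mu-bound_i} for which $\sup_{\x\in\R^k}p^{(n)}(\x)\,e^{(\la-\vp')\,\e\x}=\infty$ for \emph{every} $\vp'<\vp^{(n)}$; since $K_n$ in \eqref{eq:th} is finite, this forbids lowering the coefficient $\la-\vp^{(n)}$. By rotating everything we may take $\e=\e_1:=(1,0,\dots,0)$, so that $\e\x=x_1$; writing $\x=(x_1,\x')$ with $\x':=(x_2,\dots,x_k)$, set $p_i(\x):=q_i(x_1)\,r(\x')$, where the one-dimensional densities $q_i$ are chosen below and $r$ is the uniform density on $[0,1]^{k-1}$ (for $k=1$ take $p_i:=q_i$). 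Then \eqref{eq:exp_i}--\eqref{eq:mu-bound_i} for $p_i$ follow from the one-dimensional requirements $\int_\R e^{\la x}q_i(x)\,\dd x<\infty$ and $q_i(x)\le C_i e^{-\mu_i x}$, while $p^{(n)}(\x)=q^{(n)}(x_1)\,r^{*n}(\x')$ with $q^{(n)}:=q_1*\dots*q_n$. As $r^{*n}(\x'_0)>0$ for $\x'_0:=(n/2,\dots,n/2)$, the whole matter reduces to producing one-dimensional densities $q_1,\dots,q_n$ with the two displayed properties for which $q^{(n)}(x)\,e^{(\la-\vp')x}$ is unbounded in $x$.

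For the $q_i$ I would take an exponentially decaying background plus a sequence of narrow tall spikes: with $\vp_i:=\la-\mu_i$, put $q_i:=f_i/Z_i$, where
\begin{equation*}
	f_i(x):=e^{-(\la+1)x}\,\mathbf 1_{\{x\ge0\}}+\sum_{m\ge1}e^{-\mu_i m}\,\mathbf 1_{[m,\,m+\de_{i,m}]}(x),\qquad \de_{i,m}:=m^{-2}\,e^{-\vp_i m},
\end{equation*}
and $Z_i:=\int_\R f_i=\tfrac1{\la+1}+\sum_{m\ge1}m^{-2}e^{-\la m}\in(0,\infty)$. The spikes have width $\de_{i,m}<1$, sit at distinct integers, and do not overlap. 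On the $m$-th spike $x\ge m$, so $e^{-\mu_i m}\le e^{\mu_i}e^{-\mu_i x}$, and off the spikes $e^{-(\la+1)x}\le e^{-\mu_i x}$ for $x\ge0$; hence $q_i(x)\le Z_i^{-1}(1+e^{\mu_i})\,e^{-\mu_i x}$, i.e.\ \eqref{eq:mu-bound_i}. Also, using $\mu_i+\vp_i=\la$,
\begin{equation*}
	\int_\R e^{\la x}f_i(x)\,\dd x=\int_0^\infty e^{-x}\,\dd x+\sum_{m\ge1}e^{-\mu_i m}\int_m^{m+\de_{i,m}}e^{\la x}\,\dd x\ \le\ 1+e^{\la}\sum_{m\ge1}m^{-2}\ <\ \infty,
\end{equation*}
which is \eqref{eq:exp_i}.

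To bound $q^{(n)}$ below I would retain in each $f_i$ only its $m_i$-th spike, for integers $m_1,\dots,m_n\ge1$ to be chosen. Writing $\de_i:=\de_{i,m_i}$, $s:=\sum_i m_i$, and $B:=\mathbf 1_{[0,\de_1]}*\dots*\mathbf 1_{[0,\de_n]}$, nonnegativity gives $q^{(n)}(x)\ge\big(\prod_i Z_i\big)^{-1}\big(\prod_i e^{-\mu_i m_i}\big)\,B(x-s)$. The function $B$ is supported on $[0,\sum_i\de_i]$ with integral $\prod_i\de_i$, so $B(y^*)\ge\big(\prod_i\de_i\big)/\big(\sum_i\de_i\big)$ for some $y^*\in[0,\sum_i\de_i]$; evaluating at $x^*:=s+y^*\ (\ge s)$, and using $\mu_i+\vp_i=\la$, $\prod_i\de_i=\big(\prod_i m_i^2\big)^{-1}e^{-\sum_i\vp_i m_i}$, $\sum_i\de_i\le n\,e^{-\min_i\vp_i m_i}$, and $\la-\vp'>0$ (since $\vp'<\vp^{(n)}\le\la$), a short computation gives
\begin{equation*}
	q^{(n)}(x^*)\,e^{(\la-\vp')x^*}\ \ge\ \frac1{n\,\prod_i m_i^2\,\prod_i Z_i}\;e^{\;\min_i(\vp_i m_i)\,-\,\vp' s}.
\end{equation*}
Now take $m_i:=\lceil t/\vp_i\rceil$ and let $t\to\infty$: then $\min_i\vp_i m_i\ge t$, while $s\le t\sum_i\vp_i^{-1}+n=t/\vp^{(n)}+n$ by \eqref{eq:vp}, and $\prod_i m_i^2=O(t^{2n})$; hence the right-hand side is at least $c\,t^{-2n}e^{(1-\vp'/\vp^{(n)})t}$ for some constant $c>0$ and all large $t$. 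Since $\vp'<\vp^{(n)}$ this tends to $\infty$, and $x^*\ge s\to\infty$, so $q^{(n)}(\cdot)\,e^{(\la-\vp')\,\cdot}$ is unbounded, as needed.

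The one genuinely delicate point is the calibration of the spikes: their widths must be large enough ($\asymp e^{-\vp_i m}$, up to the polynomial damping $m^{-2}$) for the box-convolution $B$ to remain appreciably positive, yet small enough that the spike contributions $\sum_m e^{-\mu_i m}\,\de_{i,m}\,e^{\la m}$ to the exponential moment still sum --- which is exactly what the factor $m^{-2}$ buys. After that the argument is bookkeeping resting on $\mu_i+\vp_i=\la$ and $\sum_i\vp_i^{-1}=1/\vp^{(n)}$.
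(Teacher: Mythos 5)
Your construction is correct: the densities $q_i$ satisfy \eqref{eq:exp_i}--\eqref{eq:mu-bound_i} (the width $m^{-2}e^{-\vp_i m}$ makes the $m$-th spike contribute $O(m^{-2})$ to $\int e^{\la x}q_i$ while its height $e^{-\mu_i m}$ saturates the bound $C_ie^{-\mu_i x}$), the single-spike lower bound $q^{(n)}\ge(\prod_iZ_i)^{-1}(\prod_ie^{-\mu_i m_i})\,B(\cdot-s)$ is legitimate by monotonicity of convolution of nonnegative functions, and the choice $m_i=\lceil t/\vp_i\rceil$ together with $\sum_i\vp_i^{-1}=1/\vp^{(n)}$ delivers the divergence. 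The paper proves the same sharpness by a genuinely different route: it introduces, for each $(\la,\vp)$, a class $\p_{\la,\vp}$ of densities minorized by a fixed mixture of Gaussians centered at the integers $j$, with weights $e^{-\la|j|}(j^2+1)^{-\al}$ and standard deviations $\ka e^{-\vp|j|}$, and establishes a four-part lemma whose crux is the closure property $\p_{\la,\vp}*\p_{\la,\de}\subseteq\p_{\la,\vp\de/(\vp+\de)}$, iterated by induction on $n$; membership in $\p_{\la,\vp^{(n)}}$ then rules out any exponent better than $\la-\vp^{(n)}$. Your rectangular spikes play the role of the paper's Gaussian bumps, and your calibration $\vp_1m_1\approx\dots\approx\vp_nm_n$ is exactly the optimization hidden in the paper's split $i_m\approx m\de/(\vp+\de)$, $j_m\approx m\vp/(\vp+\de)$; but by extracting a pointwise lower bound from one spike per factor you avoid having to show the convolution again lies in a structured class, so you dispense with both the closure lemma and the induction and treat all $n$ in one stroke. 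What the paper's heavier machinery buys is a reusable description of the extremal densities (mixtures of nearly singular, regularly spaced components), which it exploits in its discussion section; as a proof of Proposition~\ref{prop:best} alone, your argument is shorter and entirely adequate.
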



From Theorem~\ref{th:}, one immediately obtains the particular ``i.i.d.'' case: 

\begin{corollary}\label{cor:iid}
If conditions \eqref{eq:exp} and \eqref{eq:mu-bound} hold, then 
for each natural $n$ there exists a constant 
$K_n$, which depends only on the numbers $n$, $\la$, $\mu$, $M$, and $C$, 
such that
\begin{equation}\label{eq:iid}
	p^{*n}(\x)\le K_n\,e^{-(\la-\vp/n)\,\e\x}\quad\text{for all $\x\in\R^k$, }
\end{equation}
where $\vp:=\la-\mu$. 
\end{corollary}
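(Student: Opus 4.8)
The plan is to obtain Corollary~\ref{cor:iid} as the immediate i.i.d.\ specialization of Theorem~\ref{th:}. Given a random vector $\X$ with density $p$ satisfying \eqref{eq:exp} and \eqref{eq:mu-bound}, I would take $\X_1,\dots,\X_n$ to be i.i.d.\ copies of $\X$, so that each $\X_i$ has density $p_i=p$. Then condition \eqref{eq:exp_i} holds with $M_i=M$ for every $i$, and condition \eqref{eq:mu-bound_i} holds with $C_i=C$ and $\mu_i=\mu$ for every $i$; in particular $\mu_i=\mu\in[0,\la)$, as required. The convolution $p^{(n)}=p_1*\dots*p_n$ of \eqref{eq:p} is then exactly the $n$-fold convolution $p^{*n}$, the density of $\X_1+\dots+\X_n$.

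Next I would evaluate the harmonic-mean quantity $\vp^{(n)}$ of \eqref{eq:vp} in this case: since $\vp_i=\la-\mu_i=\la-\mu=\vp>0$ for all $i$, the sum $\frac1{\vp_1}+\dots+\frac1{\vp_n}$ equals $n/\vp$, whence $\vp^{(n)}=\vp/n$. Substituting this and $p^{(n)}=p^{*n}$ into the bound \eqref{eq:th} yields precisely the asserted inequality \eqref{eq:iid}, namely $p^{*n}(\x)\le K_n\,e^{-(\la-\vp/n)\,\e\x}$ for all $\x\in\R^k$.

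Finally, I would note that Theorem~\ref{th:} guarantees its constant $K_n$ depends only on $n$, $\la$, the $\mu_i$'s, the $M_i$'s, and the $C_i$'s; in the present situation these reduce to $n$, $\la$, $\mu$, $M$, and $C$, which is exactly the dependence claimed in the corollary. There is essentially no obstacle here: the corollary is a pure specialization, and the only thing to verify — besides the trivial identification of the hypotheses and of the convolution — is the one-line computation that the harmonic mean of $n$ copies of $\vp$, divided by $n$, equals $\vp/n$.
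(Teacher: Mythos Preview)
Your proposal is correct and matches the paper's approach exactly: the paper simply states that Corollary~\ref{cor:iid} follows ``immediately'' from Theorem~\ref{th:} as the i.i.d.\ specialization, which is precisely what you have written out. Your computation $\vp^{(n)}=\vp/n$ and the reduction of the constant's dependence are the only points to check, and both are handled correctly.
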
 

It follows from Proposition~\ref{prop:best} that the coefficient $\la-\vp/n$ in the exponent in the bound \eqref{eq:iid} is the best possible. 

In turn, Corollary~\ref{cor:iid} yields 

\begin{corollary}\label{cor:tilt}
If conditions \eqref{eq:exp} and \eqref{eq:mu-bound} hold, then for each $t\in(0,\la)$ there exists a natural number $n_t$ such that for all natural $n\ge n_t$ the $n$-fold convolution $\tilde p_t^{*n}$ of the $t$-tilted density 
\begin{equation}\label{eq:p tilt}
	\tilde p_t(\x):=\frac{e^{t\,\e\x}p(\x)}{\E e^{t\,\e\X}}\quad (\x\in\R^k)  
\end{equation}
is bounded. 
\end{corollary}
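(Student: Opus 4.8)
The plan is to derive the corollary from Corollary~\ref{cor:iid}, used \emph{twice}: once for $p$ in the direction $\e$ and once for a reweighted probability density in the opposite direction $-\e$. Throughout, write $R(s):=\E e^{s\,\e\X}=\int_{\R^k}e^{s\,\e\x}p(\x)\dd\x$; since $R$ is convex with $R(0)=1$ and $R(\la)=M<\infty$ by \eqref{eq:exp}, it is finite on $[0,\la]$, so in particular $R(t),R(\mu)\in(0,\infty)$. The one structural fact I would use at the outset is that tilting commutes with convolution:
\begin{equation*}
	\tilde p_t(\x)=\frac{e^{t\,\e\x}p(\x)}{R(t)}\ \Longrightarrow\ \tilde p_t^{*n}(\x)=\frac{e^{t\,\e\x}}{R(t)^n}\,p^{*n}(\x)\qquad(\x\in\R^k).
\end{equation*}
Consequently, showing that $\tilde p_t^{*n}$ is bounded is exactly showing that $p^{*n}(\x)\le(\mathrm{const})\,e^{-t\,\e\x}$ for all $\x$, and I would verify this separately on the two half-spaces $\{\e\x\ge0\}$ and $\{\e\x\le0\}$.

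On $\{\e\x\ge0\}$ the argument is direct. Corollary~\ref{cor:iid} gives $p^{*n}(\x)\le K_n\,e^{-(\la-\vp/n)\,\e\x}$ with $\vp=\la-\mu$, hence $\tilde p_t^{*n}(\x)\le (K_n/R(t)^n)\,e^{-(\la-t-\vp/n)\,\e\x}$. Since $t<\la$, the coefficient $\la-t-\vp/n$ is nonnegative as soon as $n\ge\vp/(\la-t)$, and then the bound is $\le K_n/R(t)^n$ on all of $\{\e\x\ge0\}$.

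The half-space $\{\e\x\le0\}$ is where the real work lies. Here $e^{t\,\e\x}\le1$ works in our favour, but if $\mu>0$ the density $p$ — and, under any purely pointwise bookkeeping, its convolutions as well — may grow like $e^{\mu|\e\x|}$ as $\e\x\to-\infty$, which no fixed $t>0$ can offset; overcoming this is what the \emph{second} application of Corollary~\ref{cor:iid} is for. (If $\mu=0$ there is nothing to do: $p$ is bounded, so $\|p^{*n}\|_\infty\le\|p\|_\infty$ for $n\ge2$ by $\|f*g\|_\infty\le\|f\|_\infty\|g\|_1$, and then $\tilde p_t^{*n}\le\|p\|_\infty/R(t)^n$ on $\{\e\x\le0\}$.) For $\mu>0$ I would set $q(\x):=e^{\mu\,\e\x}p(\x)$, which by \eqref{eq:mu-bound} is bounded by $C$ and has $\int q=R(\mu)<\infty$, and pass to the bounded probability density $\bar q:=q/R(\mu)$. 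Since $\int e^{-\mu\,\e\x}\bar q(\x)\dd\x=\tfrac1{R(\mu)}<\infty$, the density $\bar q$ satisfies the hypotheses of Corollary~\ref{cor:iid} with respect to the unit vector $-\e$, with the roles of $\la$ and $\mu$ played by $\mu$ and $0$ (so the deficiency is $\mu$); the corollary then gives $\bar q^{*n}(\x)\le\bar K_n\,e^{(\mu-\mu/n)\,\e\x}$. Because $p(\x)=R(\mu)\,e^{-\mu\,\e\x}\bar q(\x)$, the same tilt--convolution commutation yields $p^{*n}(\x)=R(\mu)^n e^{-\mu\,\e\x}\,\bar q^{*n}(\x)\le R(\mu)^n\bar K_n\,e^{-(\mu/n)\,\e\x}$, so $\tilde p_t^{*n}(\x)\le (R(\mu)^n\bar K_n/R(t)^n)\,e^{(t-\mu/n)\,\e\x}$, which is $\le R(\mu)^n\bar K_n/R(t)^n$ on all of $\{\e\x\le0\}$ once $n\ge\mu/t$.

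Combining the two half-space bounds shows that $\tilde p_t^{*n}$ is bounded on $\R^k$ for every $n\ge n_t:=\max\{2,\lceil\vp/(\la-t)\rceil,\lceil\mu/t\rceil\}$, which is the assertion. I expect the only non-routine point to be the reweighting device just described — recognizing that $\{\e\x\to-\infty\}$ cannot be controlled by bounding $\tilde p_t$ pointwise (it need not be bounded there) but must instead be handled by a second invocation of Corollary~\ref{cor:iid} in the direction $-\e$, applied to the normalized bounded density $\bar q$; the rest is arithmetic with exponents and the finiteness of $R$ on $[0,\la]$.
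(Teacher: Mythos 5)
Your proof is correct, but it takes a genuinely different --- and in fact more complete --- route than the paper's. The paper's entire proof of this corollary is the identity $\tilde p_t^{*n}(\x)=e^{t\,\e\x}p^{*n}(\x)/(\E e^{t\,\e\X})^n$ followed by an implicit appeal to Corollary~\ref{cor:iid}; that yields $\tilde p_t^{*n}(\x)\le K_n(\E e^{t\,\e\X})^{-n}e^{-(\la-t-\vp/n)\,\e\x}$ and hence boundedness on the half-space $\{\e\x\ge0\}$ once $n\ge\vp/(\la-t)$, which is exactly your first step and is where the paper's remark that one may take $n_t=\lceil(\la-\mu)/(\la-t)\rceil$ comes from. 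When $\mu=0$ (the case stated in the abstract and used in Section~\ref{discuss}) this suffices, since $p$ and hence every $p^{*n}$ is bounded, which disposes of $\{\e\x\le0\}$ trivially. For $\mu>0$, however, the paper's argument gives no control as $\e\x\to-\infty$, and your diagnosis that this half-space needs separate treatment is right: with $k=1$, $\la=1$, $\mu=0.9$, $t=0.01$ one has $\lceil(\la-\mu)/(\la-t)\rceil=1$, yet a density with spikes of height $e^{\mu j}$ and width $e^{-(\mu+\eta)j}$ at $x=-j$ satisfies \eqref{eq:exp} and \eqref{eq:mu-bound} while $\tilde p_t(-j)$ is of order $e^{(\mu-t)j}\to\infty$; more generally $\tilde p_t^{*n}$ can be made unbounded whenever $n<\mu/t$ by taking $\eta$ small, so your additional threshold $\lceil\mu/t\rceil$ is not an artifact of your method but is genuinely required. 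Your reverse-tilting device --- applying Corollary~\ref{cor:iid} to $\bar q:=e^{\mu\,\e\x}p(\x)/\E e^{\mu\,\e\X}$ in the direction $-\e$ with deficiency $\mu$, then untilting to get $p^{*n}(\x)\le(\E e^{\mu\,\e\X})^n\bar K_n\,e^{-(\mu/n)\,\e\x}$ --- is sound (the finiteness of $\E e^{\mu\,\e\X}$ follows by convexity as you note, and tilting does commute with convolution), and it supplies exactly the missing bound on $\{\e\x\le0\}$. In short, your two-half-space argument proves the corollary as stated for all $\mu\in[0,\la)$ with $n_t=\max\{\lceil\vp/(\la-t)\rceil,\lceil\mu/t\rceil\}$, whereas the paper's one-line proof, and its suggested value of $n_t$, are only adequate for $\mu=0$.
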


In fact, in Corollary~\ref{cor:tilt} one may take $n_t=\lceil\frac{\la-\mu}{\la-t}\rceil$. 

Corollary~\ref{cor:tilt} can be rewritten as 

\begin{corollary}\label{cor:tilt Fourier}
If conditions \eqref{eq:exp} and \eqref{eq:mu-bound} hold, then for each $t\in(0,\la)$ there exists some $\ga_t\in(0,\infty)$ such that for all $\ga\ge\ga_t$ 
\begin{equation}\label{eq:f tilt}
\int_{\R^k}|\tilde f_t(\s)|^\ga\dd\s<\infty,  
\end{equation}
where $\tilde f_t(\s):=\int_{\R^k}e^{i\,\s\x}\,\tilde p_t(\x)\dd\x$, the characteristic function of the $t$-tilted density $\tilde p_t$; here, of course, $i$ stands for the imaginary unit. 
\end{corollary}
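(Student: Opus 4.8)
The plan is to deduce Corollary~\ref{cor:tilt Fourier} directly from Corollary~\ref{cor:tilt} by a standard Fourier-analytic argument: a bounded density whose total mass is one has a characteristic function that is bounded by one and is square-integrable, hence lies in $L^\ga$ for all $\ga\ge2$. More precisely, I would proceed as follows.

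First I would record the elementary fact that tilting and convolution commute in the sense that the $n$-fold convolution $\tilde p_t^{*n}$ is itself (up to a normalizing constant) the $t$-tilt of $p^{*n}$; in any case, by Corollary~\ref{cor:tilt}, for $n\ge n_t:=\lceil(\la-\mu)/(\la-t)\rceil$ the density $\tilde p_t^{*n}$ is a \emph{bounded} probability density on $\R^k$. Fix such an $n$. Let $\tilde f_t$ be the characteristic function of $\tilde p_t$; then the characteristic function of $\tilde p_t^{*n}$ is $\tilde f_t^{\,n}$. Since $\tilde p_t^{*n}$ is a bounded probability density, it belongs to $L^1(\R^k)\cap L^\infty(\R^k)\subseteq L^2(\R^k)$, and hence, by the Plancherel theorem, its Fourier transform $\tilde f_t^{\,n}$ belongs to $L^2(\R^k)$; that is, $\int_{\R^k}|\tilde f_t(\s)|^{2n}\dd\s<\infty$.

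Next I would bootstrap from the exponent $2n$ to all larger exponents. Because $\tilde p_t$ is a probability density, $|\tilde f_t(\s)|\le\tilde f_t(0)=1$ for all $\s$, so $|\tilde f_t(\s)|^\ga\le|\tilde f_t(\s)|^{2n}$ whenever $\ga\ge2n$. Therefore \eqref{eq:f tilt} holds for every $\ga\ge\ga_t:=2n_t$, which is the desired conclusion with an explicit value of $\ga_t$. (One can even take $n$ to be the smallest integer $\ge n_t$ to make $\ga_t$ as small as this argument allows.)

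I do not expect a genuine obstacle here: the only point requiring minor care is the justification that $\tilde p_t^{*n}$, as asserted in Corollary~\ref{cor:tilt}, is indeed the density whose characteristic function is $\tilde f_t^{\,n}$ — this is immediate since $\tilde p_t$ is a probability density and characteristic functions of independent sums multiply. The inclusion $L^1\cap L^\infty\subseteq L^2$ and the Plancherel identity are entirely standard. Thus the whole corollary is a one-paragraph consequence of Corollary~\ref{cor:tilt} together with $\|\tilde f_t\|_\infty\le1$.
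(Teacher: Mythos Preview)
Your argument is correct and matches the paper's proof essentially line for line: invoke Corollary~\ref{cor:tilt} to get $\tilde p_t^{*n_t}$ bounded, apply Plancherel to conclude $\int_{\R^k}|\tilde f_t(\s)|^{2n_t}\dd\s<\infty$, and then use $|\tilde f_t|\le1$ to extend to all $\ga\ge\ga_t:=2n_t$. The paper additionally records the converse implication (via Fourier inversion) to justify the claim that Corollary~\ref{cor:tilt Fourier} is a \emph{rewriting} of Corollary~\ref{cor:tilt}, but that is not part of the stated corollary itself.
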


\begin{remark}\label{rem:grouping} In applications, one may of course assume the ``grouping'': $\X_j=\Y_{m_{j-1}+1}+\dots+\Y_{m_j}$ 
for $j=1,\dots,n$, where $0=m_0<m_1<\dots$ and the $\Y$'s are independent random vectors, whose distributions may themselves not have a density. Then the densities $p_1,\dots,p_n$ as in Theorem~\ref{th:} will be the densities of the convolutions of the distributions of the corresponding $\Y$'s. 
\end{remark} 

\section{Discussion}\label{discuss}

The condition of the boundedness of the $n$-fold convolution $\tilde p_t^{*n}$ of the tilted density $\tilde p_t$ or, equivalently, the condition \eqref{eq:f tilt} of the absolute integrability of the corresponding ``tilted'' characteristic function is needed to derive saddle-point approximations. Surveys of literature on such approximations are given e.g.\ in \cite{daniels87,reid}; for more recent work see e.g.\ \cite{shao-saddle,shao-saddle-review}.

In the context of saddle-point approximations, the tilting is sometimes described as imbedding the original density $p$ into the exponential family \eqref{eq:p tilt}. The condition of the boundedness of $\tilde p_t^{*m}$ for \emph{all} relevant values of the tilting parameter $t$ and all large enough $m$ appears to be usually imposed outright; see e.g.\ Barndorff-Nielsen and 
Cox~\cite[page~298, condition c]{barn-n-cox}; Lugannani and Rice \cite[page~481, condition (ii)]{lug-rice} impose an even stronger condition, requiring (for $k=1$) that $|\tilde f_t(s)|=O((1+|s|)^{-\ga})$ for some $\ga>0$ and all $s\in\R$. 

On the other hand, Corollaries~\ref{cor:tilt} and \ref{cor:tilt Fourier} together with Remark~\ref{rem:grouping} show that one need \emph{a priori} require the boundedness of $\tilde p_t^{*m}$ only for $t=0$ and some natural  $m$, that is, only for some convolution $p^{*m}$ of the original, un-tilted density $p$; then $\tilde p_t^{*m}$ will necessarily be bounded for all $t$ in the interval $[0,\la)$ and all large enough $m$. 

The considerations presented above in this section constituted the original motivation for the present work. 
%
The proof of Proposition~\ref{prop:best} (given in the next section) shows that probability densities with the deficiencies most resistant to convolution are mixtures of infinitely many mutually (almost) singular densities, spaced regularly enough (see Fig.\ \ref{fig:1} on page~\pageref{fig:1}). 
Such ``exponentially deficient'' distributions can be contrasted with the well-studied classes of regualrly behaving distributions with nearly exponential tails; see e.g.\ \cite{embr-goldie,kluppelberg,pin85}.  

\section{Proofs}\label{proofs}

\begin{proof}[Proof of Theorem~\ref{th:}]
To begin, note that for $n=1$ the inequality \eqref{eq:th} with $K_1:=C_1$ is the same as \eqref{eq:mu-bound_i}. 
Next, a trivial remark is that \eqref{eq:exp_i} implies $\E e^{\la\,\e(\X_1+\dots+\X_{n-1})}=M_1\cdots M_{n-1}<\infty$. 
Note also that \eqref{eq:vp} can rewritten in an additive form, as 
\begin{equation*}
	\frac1{\vp^{(n)}}=\frac1{\vp_1}+\dots+\frac1{\vp_n}. 
\end{equation*}

So, by induction, it suffices to prove Theorem~\ref{th:} for $n=2$. For such a case, let us simplify the notation by writing $p$ and $q$ instead of $p_1$ and $p_2$, $M$ and $N$ instead of $M_1$ and $M_2$, $C$ and $D$ instead of $C_1$ and $C_2$, $\mu$ and $\nu$ instead of $\mu_1$ and $\mu_2$, and $\vp$ and $\de$ instead of $\vp_1$ and $\vp_2$. 

Next, without loss of generality, $\e=(1,0,\dots,0)\in\R^k$. Then, identifying any vector $\x\in\R^k$ with the corresponding pair $(x,\y)\in\R\times\R^{k-1}$, one has $\e\x=x$, so that \eqref{eq:mu-bound_i} can in this case be rewritten as 
\begin{equation}\label{eq:mu-rewr}
	p(x,\y)\le C\,e^{-\mu x} \quad\text{and}\quad
	q(x,\y)\le D\,e^{-\nu x}
\end{equation}
for all $(x,\y)\in\R\times\R^{k-1}$. 
Also, conditions \eqref{eq:exp_i} imply 
\begin{align}
	\int_x^\infty \dd u\,\tp(u)\le M\,e^{-\la x} \quad\text{and}\quad
	\int_x^\infty \dd u\,\tq(u)\le N\,e^{-\la x} \label{eq:exp-rewr}
\end{align} 
for all $x\in\R$, where 
\begin{equation*}
	\tp(u):=\int_{\R^k}\dd\v\,p(u,\v) \quad\text{and}\quad
	\tq(u):=\int_{\R^k}\dd\v\,q(u,\v)
\end{equation*}
for all $u\in\R$, the densities of the random variables $\e\X_1$ and $\e\X_2$, respectively. 

Fix now any $(x,\y)\in\R\times\R^{k-1}$. 
Take, for a moment, any $\al\in(0,1)$ and let $\be:=1-\al$. 
Then
\begin{align}
	(p*q)(x,\y)&=\int_\R\dd u\,\int_{\R^{k-1}}\dd\v\,p(x-u,\y-\v)\,q(u,\v)\le D\,I_1+C\,I_2\label{eq:I1+I2}
\end{align}
by \eqref{eq:mu-rewr}, 
where  
\begin{align}
	I_1&:=\int_{-\infty}^{\al x}\dd u\,\int_{\R^{k-1}}\dd\v\,p(x-u,\y-\v)\,e^{-\nu u} 
	=\int_{-\infty}^{\al x}\dd u\,\tp(x-u)\,e^{-\nu u}, \notag\\
	I_2&:=\int_{\al x}^\infty\dd u\,\int_{\R^{k-1}}\dd\v\,p(x-u,\y-\v)q(u,\v) 
	=\int_{-\infty}^{\be x}\dd z\,\tq(x-z)\,e^{-\mu z}. \notag 
\end{align}
Next, in view of \eqref{eq:exp-rewr}, 
\begin{align}
	I_1&=\int_{-\infty}^{\al x}\dd u\,\tp(x-u)\,\int_u^\infty\nu\dd z\,e^{-\nu z} \\
	&=\int_{-\infty}^{\al x}\nu\dd z\,e^{-\nu z}\int_{-\infty}^z\dd u\,\tp(x-u)
	+ \int_{\al x}^\infty\nu\dd z\,e^{-\nu z}\int_{-\infty}^{\al x}\dd u\,\tp(x-u) \notag\\
	&=\int_{-\infty}^{\al x}\nu\dd z\,e^{-\nu z}\int_{x-z}^\infty\dd w\,\tp(w)
	+ \int_{\al x}^\infty\nu\dd z\,e^{-\nu z}\int_{(1-\al)x}^\infty\dd w\,\tp(w) \notag\\
	&\le\int_{-\infty}^{\al x}\nu\dd z\,e^{-\nu z}\,M\,e^{-\la(x-z)}
	+ \int_{\al x}^\infty\nu\dd z\,e^{-\nu z}\,M\,e^{-\la(1-\al)x} \notag\\
	&=M\,\frac\la{\la-\nu}\,e^{-\big(\la-(\la-\nu)\al\big)x}. \label{eq:I11} 
\end{align}
Note that this derivation of the upper bound \eqref{eq:I11} on $I_1$ is valid only for $\nu\ne0$. 
However, if $\nu=0$, then 
$$I_1=\int_{-\infty}^{\al x}\dd u\,\tp(x-u)=\int_{(1-\al)x}^\infty\dd z\,\tp(z)\le M\,e^{-\la(1-\al)x},$$
so that the bound \eqref{eq:I11} on $I_1$ holds for $\nu=0$ as well. 
Recall now that $\vp=\la-\mu$ and $\de=\la-\nu$, and choose
$\al:=\frac\vp{\vp+\de}$. 
Then \eqref{eq:I11} can be rewritten as 
\begin{equation}\label{eq:I1 bound}
	I_1\le M\,\frac\la\de\,e^{-(\la-\vp^{(2)})x}, 
\end{equation}
with $\vp^{(2)}=\frac{\vp\de}{\vp+\de}=\frac1{1/\vp+1/\de}$, in accordance with the definition \eqref{eq:vp} of $\vp^{(n)}$. 
Quite similarly, 
\begin{equation}\label{eq:I2 bound}
	I_2\le N\,\frac\la\vp\,e^{-(\la-\vp^{(2)})x}.  
\end{equation}

Collecting now \eqref{eq:I1+I2}, \eqref{eq:I1 bound}, and \eqref{eq:I2 bound}, one sees that 
\begin{equation}\label{eq:n=2}
	(p*q)(x,\y)\le K_2\,e^{-(\la-\vp^{(2)})x}
\end{equation}
for some constant $K_2$ depending only on $\la$, $\mu$, $M$, $N$, $C$, and $D$, and for all $(x,\y)\in\R\times\R^{k-1}$. 
%
Thus, Theorem~\ref{th:} is proved for $n=2$ and, thereby, for all natural $n$.
\end{proof}

The proof of Proposition~\ref{prop:best} rests on Lemma~\ref{lem:} below. To state the lemma, for any $\la\in(0,\infty)$ and $\vp\in(0,\la]$ introduce the class $\p_{\la,\vp}$ of all probability densities $p$ on $\R$ such that 
\begin{enumerate}[(i)]
	\item $\int_\R e^{\la x}p(x)\dd x<\infty$ and 
	\item $p(x)\ge c\,p_{\la,\vp,\ka,\al}(x)$ for some $c\in(0,\infty)$, $\ka\in(0,\infty)$, $\al\in(\frac12,\infty)$, and all $x\in\R$, where
\begin{align}
	 p_{\la,\vp,\ka,\al}(x)&:=\sum_{j=-\infty}^\infty W_j(x), \label{eq:p_ def}\\
	 W_j(x)&:=W_{j;\la,\vp,\ka,\al}(x):=w_j\,f_{j,\ka e^{-\vp|j|}}(x), \label{eq:W}\\
	 w_j&:=w_{j;\la,\al}:=\frac{e^{-\la|j|}}{(j^2+1)^\al}, \label{eq:aj}\\
	 f_{a,b}(x)&:=\frac1b\,\vpi\Big(\frac{x-a}b\Big),\quad \vpi(u):=\frac1{\sqrt{2\pi}}\,e^{-u^2/2}; \notag
\end{align}
of course, $f_{a,b}$ is the density of the normal distribution with mean $a$ and variance $b^2$. 
\end{enumerate}

\ \hspace{-3pt}\big(One could similarly, and even a little more easily, deal with the ``asymmetric'' version of the class $\p_{\la,\vp}$, having $\sum_{j=-\infty}^\infty$ in \eqref{eq:p_ def} replaced by $\sum_{j=0}^\infty$.\big) 

\begin{lemma}\label{lem:}\ Take any $\la\in(0,\infty)$, $\vp\in(0,\la]$, $\ka\in(0,\infty)$, and $\al\in(\frac12,\infty)$. 
\begin{enumerate}[(I)]
	\item There exists some $c_{\la,\vp,\ka,\al}\in(0,\infty)$ such that 
	$\tilde p_{\la,\vp,\ka,\al}:=\dfrac{p_{\la,\vp,\ka,\al}}{c_{\la,\vp,\ka,\al}}\in\p_{\la,\vp}$. In particular, it follows that $\p_{\la,\vp}\ne\emptyset$. 
	\item There exists some $C=C_{\la,\vp,\ka,\al}\in(0,\infty)$ such that for $p=p_{\la,\vp,\ka,\al}$ and $\mu:=\la-\vp$  
\begin{equation}\label{eq:mu-bound,p}
	p(x)\le C\,e^{-\mu x} \quad\text{for all $x\in\R$.}
\end{equation}
\item For any $p\in\p_{\la,\vp}$ and any $C\in(0,\infty)$, relation \eqref{eq:mu-bound,p} does not hold with any $\mu^\diamond\in(\la-\vp,\infty)$ in place of $\mu$. 
\item In addition to $\vp$, take any $\de\in(0,\la]$.  
Then, for any $p\in\p_{\la,\vp}$ and $q\in\p_{\la,\de}$, one has $p*q\in\p_{\la,\tilde\vp}$, where 
\begin{equation}\label{eq:tvp}
	\tilde\vp:=\frac1{\frac1\vp+\frac1\de}=\frac{\vp\de}{\vp+\de}.
\end{equation}
\end{enumerate}
\end{lemma}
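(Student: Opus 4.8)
Throughout I write $\vpi$ for the standard normal density, so that $f_{a,b}(x)=\tfrac1b\vpi(\tfrac{x-a}b)$, $\int f_{a,b}=1$, and $\int e^{\la x}f_{a,b}(x)\dd x=e^{\la a+\la^2b^2/2}$; I also put $b_j:=\ka e^{-\vp|j|}\le\ka$ and $\mu:=\la-\vp\ge0$. Parts (I)--(III) are short consequences of the explicit form of $p_{\la,\vp,\ka,\al}$, so I only indicate them. For (I): $\int p_{\la,\vp,\ka,\al}=\sum_j w_j$ and $\int e^{\la x}p_{\la,\vp,\ka,\al}(x)\dd x=\sum_j w_j\,e^{\la j+\la^2b_j^2/2}$ are finite and positive, both being $\le e^{\la^2\ka^2/2}\sum_j(j^2+1)^{-\al}$ (one uses $e^{-\la|j|}e^{\la j}\le1$, $b_j\le\ka$, and $\al>\tfrac12$); normalizing then puts $\tilde p_{\la,\vp,\ka,\al}$ in $\p_{\la,\vp}$. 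For (II): since $W_j(x)=\tfrac{w_j}{b_j}\vpi(\tfrac{x-j}{b_j})$ with $\tfrac{w_j}{b_j}e^{\mu j}=\tfrac{e^{-\mu|j|+\mu j}}{\ka(j^2+1)^\al}\le\tfrac1{\ka(j^2+1)^\al}$ and $e^{\mu t}\vpi(t/b)\le e^{\mu^2b^2/2}/\sqrt{2\pi}\le e^{\mu^2\ka^2/2}/\sqrt{2\pi}$, summing $e^{\mu x}p_{\la,\vp,\ka,\al}(x)=\sum_j\tfrac{w_j}{b_j}e^{\mu j}\,e^{\mu(x-j)}\vpi(\tfrac{x-j}{b_j})$ yields the constant bound $C_{\la,\vp,\ka,\al}:=\tfrac{e^{\mu^2\ka^2/2}}{\ka\sqrt{2\pi}}\sum_j(j^2+1)^{-\al}$. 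For (III): if $p\ge c\,p_{\la,\vp,\ka,\al}$ then $p(j)\ge c\,W_j(j)=\tfrac{c}{\ka\sqrt{2\pi}}(j^2+1)^{-\al}e^{-\mu j}$ for integers $j\ge0$, so \eqref{eq:mu-bound,p} with any $\mu^\diamond>\mu$ would, letting $j\to\infty$, bound a polynomially decaying quantity by an exponentially decaying one --- impossible.

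Part (IV) is where the real work lies. Condition (i) for $p*q$ is immediate from multiplicativity of the moment generating function. For (ii), since $p\ge c_p\,p_{\la,\vp,\ka_1,\al_1}$, $q\ge c_q\,p_{\la,\de,\ka_2,\al_2}$, and convolution preserves pointwise inequalities between nonnegative functions, it suffices to bound $P:=p_{\la,\vp,\ka_1,\al_1}*p_{\la,\de,\ka_2,\al_2}$ below by a positive multiple of some $p_{\la,\tilde\vp,\tilde\ka,\tilde\al}$. Expanding both factors into their defining series and using $f_{a_1,b_1}*f_{a_2,b_2}=f_{a_1+a_2,\sqrt{b_1^2+b_2^2}}$, then keeping for each $m\in\Z$ a single (nonnegative) term with $j+k=m$, one gets, for any integers $j(m),k(m)$ with $j(m)+k(m)=m$,
\[
P=\sum_{j,k}w^{(1)}_jw^{(2)}_k\,f_{j+k,\sqrt{\ka_1^2e^{-2\vp|j|}+\ka_2^2e^{-2\de|k|}}}\ \ge\ \sum_{m\in\Z}w^{(1)}_{j(m)}w^{(2)}_{k(m)}\,f_{m,\sigma_m},\quad \sigma_m:=\sqrt{\ka_1^2e^{-2\vp|j(m)|}+\ka_2^2e^{-2\de|k(m)|}}.
\]

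The choice of $(j(m),k(m))$ exploits the identity $\tfrac{\tilde\vp}{\vp}+\tfrac{\tilde\vp}{\de}=1$: take $j(m)$ of the sign of $m$ with $|j(m)|=\lceil\tfrac{\tilde\vp}{\vp}|m|\rceil$ and $k(m):=m-j(m)$; as $\tilde\vp<\vp$ one has $|j(m)|\le|m|$, so $k(m)$ has the sign of $m$ and $|j(m)|+|k(m)|=|m|$. From $\tfrac{\tilde\vp}{\vp}|m|\le|j(m)|<\tfrac{\tilde\vp}{\vp}|m|+1$ one reads off $\vp|j(m)|\ge\tilde\vp|m|$ and $\tilde\vp|m|-\de<\de|k(m)|\le\tilde\vp|m|$, hence
\[
\ka_2e^{-\tilde\vp|m|}\ \le\ \sigma_m\ \le\ \sqrt{\ka_1^2+\ka_2^2e^{2\de}}\;e^{-\tilde\vp|m|}.
\]
Put $\tilde\ka:=\ka_2$ and $\tilde\al:=\al_1+\al_2\;(>\tfrac12)$. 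A normal density with the same mean and larger standard deviation dominates a fixed multiple of the narrower one, so $f_{m,\sigma_m}\ge\tfrac{\tilde\ka e^{-\tilde\vp|m|}}{\sigma_m}f_{m,\tilde\ka e^{-\tilde\vp|m|}}\ge\rho\,f_{m,\tilde\ka e^{-\tilde\vp|m|}}$ with $\rho:=\ka_2/\sqrt{\ka_1^2+\ka_2^2e^{2\de}}>0$; and $w^{(1)}_{j(m)}w^{(2)}_{k(m)}=\frac{e^{-\la|m|}}{(j(m)^2+1)^{\al_1}(k(m)^2+1)^{\al_2}}\ge\mathrm{const}\cdot\frac{e^{-\la|m|}}{(m^2+1)^{\tilde\al}}$ because $j(m)^2+1$ and $k(m)^2+1$ are $O(m^2+1)$. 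Thus each retained summand is $\ge\mathrm{const}\cdot W_{m;\la,\tilde\vp,\tilde\ka,\tilde\al}$, so $P\ge\mathrm{const}\cdot p_{\la,\tilde\vp,\tilde\ka,\tilde\al}$, and therefore $p*q\in\p_{\la,\tilde\vp}$, as required.

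The genuinely delicate point is this width-balancing: $m$ must be split as $j(m)+k(m)$ so that both $\ka_1e^{-\vp|j(m)|}$ and $\ka_2e^{-\de|k(m)|}$ stay comparable to $e^{-\tilde\vp|m|}$ --- not much larger, or the retained Gaussian $f_{m,\sigma_m}$ would be too broad and $\rho$ would degenerate, and not much smaller --- which is possible precisely because $\tilde\vp$ is the harmonic-type mean $\vp\de/(\vp+\de)$, making $\tfrac{\tilde\vp}{\vp}+\tfrac{\tilde\vp}{\de}=1$. Everything else, including the integer rounding, is routine bookkeeping.
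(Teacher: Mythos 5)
Your proof is correct. Parts (I), (III), and (IV) follow essentially the paper's own route: the same termwise computation of $\int e^{\la x}p_{\la,\vp,\ka,\al}$, the same evaluation $p(j)\ge c\,W_j(j)$ along integers for (III), and for (IV) the same splitting $m=j(m)+k(m)$ in the proportions $\de/(\vp+\de)$ and $\vp/(\vp+\de)$ (which is exactly what the identity $\tfrac{\tilde\vp}{\vp}+\tfrac{\tilde\vp}{\de}=1$ encodes), followed by lower-bounding the convolved Gaussian of width $\si_m\asymp e^{-\tilde\vp|m|}$ by a fixed multiple of a narrower one and absorbing the polynomial weights into $(m^2+1)^{\al_1+\al_2}$; your rounding convention and the resulting constant differ immaterially from the paper's. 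The one genuine divergence is part (II): the paper proves the bound for $x>0$ by a three-way case analysis on the location of $j$ relative to $j_x=\lfloor x\rfloor$ and $j_x-r_x$, handling separately the peaks to the right of $x$, those far to the left, and the intermediate ones where the shrinking width $\ka e^{-\vp|j|}$ must compensate for a weight that is not small enough. Your argument replaces all of this with the single uniform estimate $e^{\mu x}W_j(x)=\tfrac{w_j}{b_j}e^{\mu j}\cdot e^{\mu(x-j)}\vpi\big(\tfrac{x-j}{b_j}\big)\le\tfrac{1}{\ka(j^2+1)^\al}\cdot\tfrac{e^{\mu^2\ka^2/2}}{\sqrt{2\pi}}$, valid for every $x$ and $j$ because $\sup_t e^{\mu t}\vpi(t/b)=e^{\mu^2b^2/2}/\sqrt{2\pi}$ and $\mu=\la-\vp\ge0$ makes $e^{-\mu|j|+\mu j}\le1$; summing in $j$ gives the constant at once. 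This is shorter and arguably more transparent than the paper's case split, at the mild cost of a slightly larger (but still explicit) constant; it exploits the Gaussian form of the bumps exactly where the paper only uses unimodality plus decay. No gaps.
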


The (symmetric about $0$) probability density $\tilde p_{\la,\vp,\ka,\al}$ as in part (I) of this lemma is illustrated here: 

\begin{figure*}[h]
	\includegraphics[scale=0.75]{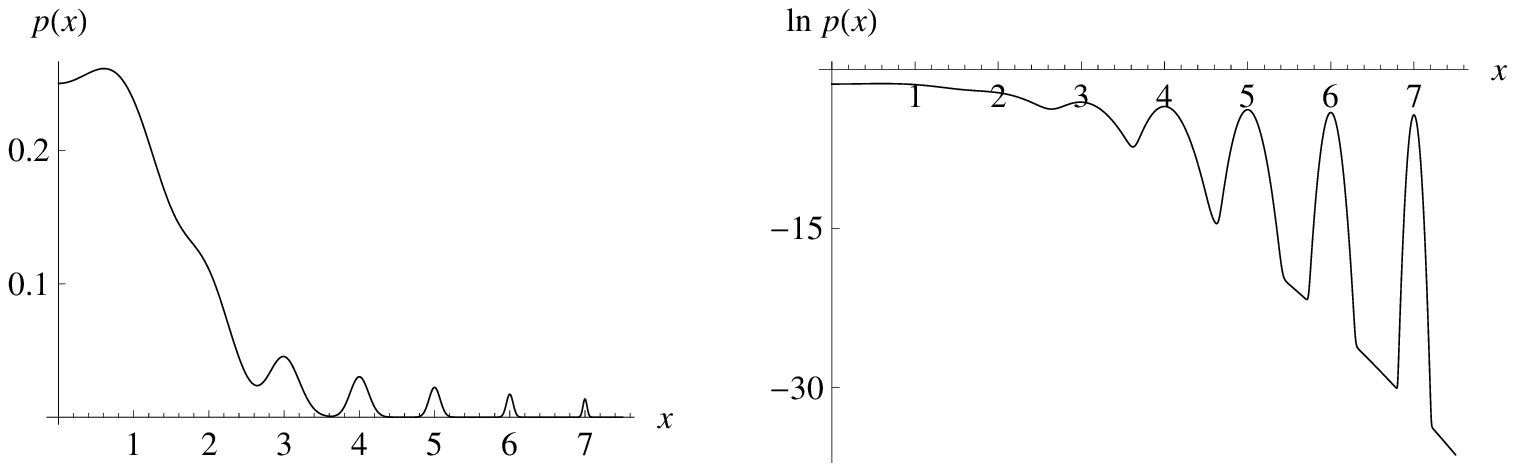}
	\caption{Graphs $\{(x,p(x))\colon0<x<7.5\}$ and $\{(x,\ln p(x))\colon0<x<7.5\}$ for $p=\tilde p_{\la,\vp,\ka,\al}$ with $\la=0.55$, $\vp=0.50$, $\ka=0.9$, and $\al=0.6$
	}
	\label{fig:1}
\end{figure*}

Let us postpone the proof of Lemma~\ref{lem:}, which is somewhat long, and proceed now to the proof of Proposition~\ref{prop:best}.

\begin{proof}[Proof of Proposition~\ref{prop:best}]
Take indeed any natural $k$ and $n$, any $\la\in(0,\infty)$, and any $\mu_1,\dots,\mu_n$ in $[0,\la)$. In accordance with 
\eqref{eq:vp}, let $\vp_i:=\la-\mu_i$, so that $\vp_i\in(0,\la]$ for all $i=1,\dots,n$. 
For each $i=1,\dots,n$, take any density $q_i\in\p_{\la,\vp_i}$ such that
\begin{equation}\label{eq:qi}
	q_i(x)\le C_i\,e^{-\mu_i x}
\end{equation}
for some finite positive real constant $C_i$ and all $x\in\R$; by parts (I) and (II) of Lemma~\ref{lem:}, such $q_i$'s do exist. 

As in the proof of Theorem~\ref{th:}, let $\e=(1,0,\dots,0)\in\R^k$ and identify any vector $\x\in\R^k$ with  $(x,\y)\in\R\times\R^{k-1}$. Then, for each $i=1,\dots,n$, introduce the densities
\begin{equation}\label{eq:p,q,vpi}
	p_i(\x)=p_i(x,\y):=q_i(x)\vpi_{k-1}(\y)
\end{equation}
for all $\x=(x,\y)\in\R\times\R^{k-1}$, where $\vpi_{k-1}(\y):=(2\pi)^{-(k-1)/2}\,e^{-\y\y/2}$ for all $\y\in\R^{k-1}$; 
then   
\begin{equation*}
	\int_{\R^k}e^{\la\,\e\x}p_i(\x)\dd\x=\int_\R e^{\la x}q_i(x)\dd x<\infty, 
\end{equation*}
since $q_i\in\p_{\la,\vp_i}$;  
also, by \eqref{eq:qi},  
\begin{equation*}
	p_i(\x)=q_i(x)\vpi_{k-1}(\y)
	\le(2\pi)^{-(k-1)/2}\,q_i(x)
	\le C_i\,e^{-\mu_i x}
\end{equation*} 
for all $\x=(x,\y)\in\R\times\R^{k-1}$. 
So, conditions \eqref{eq:exp_i} and \eqref{eq:mu-bound_i} hold. 

Next, introduce 
\begin{equation*}
	q^{(n)}:=q_1*\dots*q_n,
\end{equation*}
so that, by \eqref{eq:p} and \eqref{eq:p,q,vpi},
\begin{equation}\label{eq:q,vpi}
	p^{(n)}(\x)=p^{(n)}(x,\y)=q^{(n)}(x)\vpi_{k-1}^{*n}(\y)
\end{equation} 
for all $\x=(x,\y)\in\R\times\R^{k-1}$. 
Moreover, recalling the conditions $q_i\in\p_{\la,\vp_i}$ for $i=1,\dots,n$ and using part (IV) of Lemma~\ref{lem:}, by induction one concludes that $q^{(n)}\in\p_{\la,\vp^{(n)}}$. 

Now, to obtain a contradiction, assume that \eqref{eq:th} holds with some ``deficiency'' $\vp^\diamond$ in place of $\vp^{(n)}$ such that $\vp^\diamond<\vp^{(n)}$. Then, by \eqref{eq:q,vpi}, for $\mu^\diamond:=\la-\vp^\diamond$ 
\begin{equation*}
	q^{(n)}(x)\vpi_{k-1}^{*n}(\mathbf{0})\le K_n\,e^{-\mu^\diamond x}
\end{equation*} 
for some constant $K_n$ and all $x\in\R$. 
But this contradicts part (III) of Lemma~\ref{lem:}, since $\mu^\diamond\in(\la-\vp^{(n)},\infty)$, $q^{(n)}\in\p_{\la,\vp^{(n)}}$, and $\vpi_{k-1}^{*n}(\mathbf{0})=(2\pi n)^{-(k-1)/2}>0$. 
This concludes the proof of Proposition~\ref{prop:best}, except that one still needs to prove Lemma~\ref{lem:}. 
\end{proof}

\begin{proof}[Proof of Lemma~\ref{lem:}]\ 

(I)\quad Obviously, $p_{\la,\vp,\ka,\al}>0$ and 
$c_{\la,\vp,\ka,\al}:=\int_\R p_{\la,\vp,\ka,\al}(x)\dd x=\sum_{j=-\infty}^\infty w_j<\infty$ 
for any $\la\in(0,\infty)$, $\vp\in(0,\la]$, $\ka\in(0,\infty)$, and $\al\in(\frac12,\infty)$. 
So, $\tilde p_{\la,\vp,\ka,\al}$ is a probability density. 
Moreover, 
\begin{equation}
	\int_\R e^{\la x}p_{\la,\vp,\ka,\al}(x)\dd x
	=\sum_{j=-\infty}^\infty w_j\,\exp\big(\la j+\tfrac12\,\la^2\ka^2\,e^{-2\vp|j|}\big)
	\le\sum_{j=-\infty}^\infty \frac{e^{\la^2\ka^2/2}}{(j^2+1)^\al}<\infty. 
\end{equation}
Thus, part (I) of Lemma~\ref{lem:} is verified. 

(II)\quad Note that 
\begin{equation}\label{eq:Aj}
	W_j(x)=\frac1{\ka\sqrt{2\pi}}\,
	\frac{e^{-(\la-\vp)|j|}}{(j^2+1)^\al}
	\,\exp-\frac{(x-j)^2\,e^{2\vp|j|}}{2\ka^2}. 
\end{equation}
Hence and because $\vp\in(0,\la]$, one has 
$
	p_{\la,\vp,\ka,\al}(x)\le C:=\sum_{j=-\infty}^\infty W_j(j)<\infty
$ 
for all $x\in\R$. 
So, $p_{\la,\vp,\ka,\al}(x)\le C\,e^{-\mu x}$ for all $x\in(-\infty,0]$; that is, \eqref{eq:mu-bound,p} holds for $p=p_{\la,\vp,\ka,\al}$, $\mu=\la-\vp$, and all $x\in(-\infty,0]$. 

Take now any $x\in(0,\infty)$. Introduce $j_x:=\lfloor x\rfloor$, so that $0\le j_x\le x<j_x+1$ and for $j\ge j_x$ one has $|j|=j>x-1$. Then, in view of \eqref{eq:Aj}, 
\begin{equation}\label{eq:c1}
	\sum_{j=j_x}^\infty W_j(x)\le\sum_{j=j_x}^\infty W_j(j)
	\le\frac{e^{-(\la-\vp)(x-1)}}{\ka\sqrt{2\pi}}\,
\sum_{j=0}^\infty \frac1{(j^2+1)^\al}=c_1\,e^{-(\la-\vp)x}; 
\end{equation}
in this proof of part (II) of the lemma, let $c_1,c_2,\dots$ denote finite positive constants depending only on 
$\la,\vp,\ka,\al$. 
Next, for $r_x:=\big\lceil\ka\sqrt{2(\la-\vp)x}\,\,\big\rceil$ and $j\in(-\infty,j_x-r_x]$, one has 
$x-j\ge r_x$, whence 
\begin{equation}\label{eq:c2}
	\sum_{j=-\infty}^{j_x-r_x} W_j(x)
	\le
	\sum_{j=-\infty}^{j_x-r_x} W_j(j)\,\exp-\frac{r_x^2}{2\ka^2}
	\le c_2\,\exp-\frac{r_x^2}{2\ka^2}\le c_2\,e^{-(\la-\vp)x}.  
\end{equation}
Further, for $j\in[j_x-r_x+1,j_x-1]$ one has $x-j\ge1$ and $|j|\ge j\ge j_x-r_x+1>x-r_x\ge\frac x2-c_3$, whence 
\begin{multline}\label{eq:c3}
	\sum_{j=j_x-r_x+1}^{j_x-1} W_j(x)
	\le
	\sum_{j=j_x-r_x+1}^{j_x-1} W_j(j)\,\exp-\frac{e^{2\vp(x-r_x)}}{2\ka^2}
	\le c_4\exp-\frac{e^{\vp(x-2c_3)}}{2\ka^2} \\
	\le c_5\,e^{-(\la-\vp)x}.  
\end{multline}
So, by \eqref{eq:p_ def}, \eqref{eq:c1}, \eqref{eq:c2}, and \eqref{eq:c3}, the relation 
\eqref{eq:mu-bound,p} (with $\mu=\la-\vp$) holds for $p=p_{\la,\vp,\ka,\al}$ and all $x\in(0,\infty)$ as well. 
This completes the verification of part (II) of the lemma.

(III)\quad Take any $p\in\p_{\la,\vp}$, so that $p\ge c\,p_{\la,\vp,\ka,\al}$ for some $c\in(0,\infty)$, $\ka\in(0,\infty)$, and $\al\in(\frac12,\infty)$. Then 
\begin{equation*}
	p(j)\ge c\,p_{\la,\vp,\ka,\al}(j)\ge c\,W_j(j)
	=\frac c{\ka\sqrt{2\pi}}\,
	\frac{e^{-(\la-\vp)j}}{(j^2+1)^\al}>C\,e^{-\mu^\diamond j}
\end{equation*}
for any $\mu^\diamond\in(\la-\vp,\infty)$, any $C\in(0,\infty)$, and all large enough natural $j$. 
This proves part (III) of the lemma. 

(IV)\quad Take any $p\in\p_{\la,\vp}$  and $q\in\p_{\la,\de}$, so that $p\ge c\,p_{\la,\vp,\ka,\al}$ and $q\ge \tilde c\,p_{\la,\de,\xi,\be}$ for some $c\in(0,\infty)$, $\ka\in(0,\infty)$, $\al\in(\frac12,\infty)$, $\tilde c\in(0,\infty)$, $\xi\in(0,\infty)$, and $\be\in(\frac12,\infty)$. 

Choose for a moment any $m\in\{0,1,\dots\}$ and let 
\begin{equation}\label{eq:ij}
	i_m:=\Big\lfloor m\frac\de{\vp+\de}\Big\rfloor\quad\text{and}\quad 
	j_m:=\Big\lceil m\frac\vp{\vp+\de}\Big\rceil=m-i, 
\end{equation}
so that $m\frac\de{\vp+\de}-1\le i_m\le m\frac\de{\vp+\de}$ and 
$m\frac\vp{\vp+\de}\le j_m\le m\frac\vp{\vp+\de}+1$. 
Next, introduce 
\begin{gather*}
	\si_m:=\sqrt{\ka^2e^{-2\vp|i_m|}+\xi^2e^{-2\de|j_m|}}=\sqrt{\ka^2e^{-2\vp i_m}+\xi^2e^{-2\de j_m}},\\
	\zeta:=\sqrt{\ka^2+\xi^2e^{-2\de}},\quad
	\tilde\zeta:=\sqrt{\ka^2e^{2\vp}+\xi^2},
\end{gather*}
and observe that 
\begin{gather*}
\frac{\tilde\zeta}{\zeta}\le e^{\vp\vee\de}, \\
\si_m^2\ge\ka^2\,\exp\Big\{-2\vp m\,\frac\de{\vp+\de}\Big\}
+\xi^2\,\exp\Big\{-2\de\,\Big(m\,\frac\vp{\vp+\de}+1\Big)\Big\}
=\zeta^2\,e^{-2\tilde\vp m}, \\
\si_m^2\le\ka^2\,\exp\Big\{-2\vp\,\Big(m\,\frac\de{\vp+\de}-1\Big)\Big\}
+\xi^2\,\exp\Big\{-2\de m\,\frac\vp{\vp+\de}\Big\}
=\tilde\zeta^2\,e^{-2\tilde\vp m},
\end{gather*}
where $\tilde\vp$ is as in \eqref{eq:tvp}. 
Also, recall that here $m\ge0$, $i_m\ge0$, and $j_m\ge0$. 
It follows that for all $x\in\R$ 
\begin{multline*}
	\big(f_{i_m,\ka e^{-\vp|i_m|}}*f_{j_m,\xi e^{-\de|j_m|}}\big)(x)
	=f_{m,\si_m^2}(x)=\frac1{\si_m\sqrt{2\pi}}\,\exp-\frac{(x-m)^2}{2\si_m^2} \\
	\ge \frac{\zeta e^{-\tilde\vp m}}{\si_m}\,f_{m,\zeta\,e^{-\tilde\vp m}}(x)
	\ge \frac{\zeta}{\tilde\zeta}\,f_{m,\zeta\,e^{-\tilde\vp m}}(x)
	\ge e^{-(\vp\vee\de)}\,f_{m,\zeta\,e^{-\tilde\vp|m|}}(x).
\end{multline*}

Quite similarly (or by symmetry), one has 
\begin{equation*}
		f_{i_m,\ka e^{-\vp|i_m|}}*f_{j_m,\xi e^{-\de|j_m|}}
	\ge e^{-(\vp\vee\de)}\,f_{m,\zeta\,e^{-\tilde\vp|m|}}
\end{equation*}
for any $m\in\{-1,-2,\dots\}$, letting now $i_m:=-i_{-m}=\big\lceil m\frac\de{\vp+\de}\big\rceil$ and $j_m:=-j_{-m}=\big\lfloor m\frac\vp{\vp+\de}\big\rfloor$, so that still $i_m+j_m=m$. 

On recalling the conditions $p\ge c\,p_{\la,\vp,\ka,\al}$, $q\ge \tilde c\,p_{\la,\de,\xi,\be}$, \eqref{eq:p_ def}--\eqref{eq:aj}, and \eqref{eq:ij}, 
it follows that 
\begin{multline*}
	p*q\ge c\tilde c
	\sum_{m=-\infty}^\infty w_{i_m;\la,\al}\,w_{j_m;\la,\be}\,f_{i_m,\ka e^{-\vp|i_m|}}*f_{j_m,\xi e^{-\de|j_m|}} \\
	\ge c_1
	\sum_{m=-\infty}^\infty w_{i_m;\la,\al}\,w_{j_m;\la,\be}\,f_{m,\zeta\,e^{-\tilde\vp|m|}} \\
	\ge c_2
	\sum_{m=-\infty}^\infty w_{m;\la,\al+\be}\,f_{m,\zeta\,e^{-\tilde\vp|m|}} 
	=c_2\,p_{\la,\tilde\vp,\zeta,\al+\be},
\end{multline*}
where $c_1$ and $c_2$ are strictly positive constants depending only on $\la,\vp,\de,\ka,\xi,\al,\be$. 

Also, $\int_\R e^{\la x}(p*q)(x)\dd x=\int_\R e^{\la x}p(x)\dd x\,\int_\R e^{\la x}q(x)\dd x<\infty$. 
Thus, it has been shown that $p*q\in\p_{\la,\tilde\vp}$. 
This completes the verification of part (IV). 
The lemma is now completely proved. 
\end{proof}

\begin{proof}[Proof of Corollary~\ref{cor:tilt}]
This follows because 
$\displaystyle{
	\tilde p_t^{*n}(\x)=\frac{e^{t\,\e\x}p^{*n}(\x)}{(\E e^{t\,\e\X})^n}
	}$ for all \break
	$\x\in\R^k$. 
\end{proof}

\begin{proof}[Proof of Corollary~\ref{cor:tilt Fourier}]
Take any $t\in(0,\la)$. Then, by 
Corollary~\ref{cor:tilt}, $\tilde p_t^{*n_t}$ is bounded by some constant $K<\infty$. Then, by the Plancherel isometry (see e.g.\ \cite[Theorem 4.2]{bhat}), for all $\ga\ge2n_t$
\begin{equation*}
	\int_{\R^k}|\tilde f_t(\s)|^\ga\,\dd\s\le\int_{\R^k}|\tilde f_t(\s)|^{2n_t}\,\dd\s
	=(2\pi)^k\int_{\R^k}\tilde p_t^{*n_t}(\x)^2\,\dd\x
	\le (2\pi)^k\,K<\infty. 
\end{equation*}
Vice versa, assume that \eqref{eq:f tilt} holds for all $\ga\ge\ga_t$; then $\tilde p_t^{*n}$ is bounded for all natural $n\ge\ga_t$ by the Fourier inversion formula (see e.g.\ \cite[Theorem 4.1(iv)]{bhat}), 
since the characteristic function of $\tilde p_t^{*n}$ is $\tilde f_t(\s)^n$. 
\end{proof}

\begin{remark}\label{rem:planch}
Weaker results than the one given by Theorem~\ref{th:} or even Corollary~\ref{cor:iid} (but which  still be enough to deduce Corollaries~\ref{cor:tilt} and \ref{cor:tilt Fourier}) can be obtained more simply modulo the Plancherel isometry. Indeed, if conditions \eqref{eq:exp} and \eqref{eq:mu-bound} hold, then  
\begin{equation*}
	\int_{\R^k}|\tilde f_t(\s)|^2\dd\s
	=(2\pi)^k\int_{\R^k}\tilde p_t(\x)^2\dd\x
	=\frac{(2\pi)^k}{(\E e^{t\,\e\X})^2}\,\int_{\R^k}e^{2t\,\e\x}\,p(\x)^2\dd\x
	<\infty
\end{equation*}
for $t=\la-\vp/2$ and $\vp:=\la-\mu$, since $e^{2(\la-\vp/2)\,\e\x}\,p(\x)^2\le C\,e^{\la\,\e\x}\,p(\x)$ for all $\x\in\R$. Also, by the Fourier inversion formula
, again with $t=\la-\vp/2$,  
\begin{equation*}
	\tilde p_t^{*2}(\x)\le(2\pi)^{-k}\int_{\R^k}|\tilde f_t(\s)|^2\dd\s<\infty\quad\text{for all $\x\in\R^k$, } 
\end{equation*}
which yields \eqref{eq:iid} for $n=2$. 
Thus, by induction, one can obtain \eqref{eq:iid} for $n=2^j$, where $j$ is any natural number. 

However, it is unclear whether such an approach, via the Plancherel isometry, could be extended to yield Theorem~\ref{th:} or, at least,  Corollary~\ref{cor:iid} for all natural $n$. Anyway, it might be not worthwhile to exert efforts in such a direction, as the direct probabilistic proof of Theorem~\ref{th:} given above is rather simple already and yet produces the best possible bound on the exponential deficiency. 
\end{remark}

\bibliographystyle{acm}
\bibliography{citat}



\end{document}